\newcommand{\R}{\mathbb R}
\newcommand{\Z}{\mathbb Z}
\newcommand{\C}{\mathbb C}
\newcommand{\sgn}{\text{sgn}}
\newcommand{\be}{\begin{equation}}
\newcommand{\ee}{\end{equation}}
\newcommand{\bp}{\begin{proof}}
\newcommand{\ep}{\end{proof}}
\newcommand{\bel}{\begin{equation}\label}
\newcommand{\eeq}{\end{equation}}
\newtheorem{theorem}{Theorem}[section]
\newtheorem{proposition}[theorem]{Proposition}
\newtheorem{remark}[theorem]{Remark}
\newtheorem{lemma}[theorem]{Lemma}
\newtheorem{corollary}[theorem]{Corollary}
\numberwithin{equation}{section}
\begin{document}
\title[Unique continuation for the BO eq.]{Uniqueness Properties of Solutions to the Benjamin-Ono equation and related models.}
\author{C. E. Kenig}
\address[C. E. Kenig]{Department of Mathematics\\University of Chicago\\Chicago, Il. 60637 \\USA.}
\email{cek@math.uchicago.edu}

\author{G. Ponce}
\address[G. Ponce]{Department  of Mathematics\\
University of California\\
Santa Barbara, CA 93106\\
USA.}
\email{ponce@math.ucsb.edu}

\author{L. Vega}
\address[L. Vega]{UPV/EHU\\Dpto. de Matem\'aticas\\Apto. 644, 48080 Bilbao, Spain, and Basque Center for Applied Mathematics,
E-48009 Bilbao, Spain.}
\email{luis.vega@ehu.es}

\keywords{Benjamin-Ono equation,  unique continuation }
\subjclass{Primary: 35Q53. Secondary: 35B05}

\begin{abstract} We prove that if $u_1,\,u_2$ are solutions of the Benjamin-Ono equation  defined in 
$ (x,t)\in\R \times [0,T]$ which agree in an open set $\Omega\subset \R \times [0,T]$, then $u_1\equiv u_2$. We extend this uniqueness result to a general class of equations of Benjamin-Ono type in both  the initial value problem and the initial periodic boundary value problem. This class of 1-dimensional non-local models includes the intermediate long wave equation. Finally, we present a slightly stronger version of our uniqueness results for the Benjamin-Ono equation. 
\end{abstract}

\maketitle

\section{Introduction}

We consider the initial value problem (IVP) for the Benjamin-Ono (BO) equation
\be\label{BO} 
\begin{aligned}
\begin{cases}
& \partial_t u - \mathcal H \partial_x^2  u + u\partial_x u = 0,\qquad (x,t) \in  \, \R\times \R,\\
&u(x,0)=u_0(x), 
 \end{cases}
\end{aligned}
\ee
where $u=u(x,t) $ is a real-valued function, and $\mathcal H$ denotes the Hilbert transform
\be\label{H}
\begin{aligned}
\mathcal Hf(x)& :=\frac{1}{\pi}\,\rm{p.v.}\Big(\frac{1}{x}\ast f\Big)(x) \\
\\
&:=\frac{1}{\pi}\lim_{\epsilon\downarrow 0}\int_{|y|>\epsilon} \frac{f(x-y)}{y}dy=(-i\,\sgn(\xi)\widehat{f}(\xi))^{\vee}(x)
\end{aligned}
\ee
\medskip

The BO equation was first deduced by Benjamin \cite{Be} and Ono \cite{On} as a model for long internal gravity waves in deep stratified fluids. Later, it was shown to be a completely integrable system (see \cite {AbFo}, \cite {CoWi} and references therein). In particular, real solutions of the IVP \eqref{BO} satisfy infinitely many conservation laws, which provide an a priori estimate for the $H^{n/2}$-norm, $n\in \Z^+$.
\vskip.1in

The problem of finding  the minimal regularity measured in the Sobolev scale $\,H^s(\R)$, $\,s\in\R,$ 
required to guarantee that the IVP \eqref{BO} is locally or globally well-posed (WP) in $\,H^s(\R)$ has been extensively studied, see \cite{ABFS}, \cite{Io1}, \cite{Po}, \cite{KoTz1}, \cite{KeKo}, \cite{Ta}, \cite{BuPl} and \cite{IoKe} where global WP was established in $H^0(\R) = L^2(\R)$, (for further details and results regarding the well-posedness of the IVP \eqref{BO}  we refer to \cite{MoPi} and to \cite{IT} for a different proof of the result in \cite{IoKe}). 

We remark that a result  established in \cite{MoSaTz} (see also \cite{KoTz2}) implies that no well-posedness result in $H^s(\R), s\in\R$, for the IVP \eqref{BO} can be established by using  solely a contraction principle argument.

\medskip
It was first shown in  \cite{Io1} and \cite{Io2} that polynomial decay of the data may not be preserved by the solution flow of the BO equation. The results  in \cite{Io1} and  \cite{Io2} which present some unique continuation properties of the BO equation have been extended to fractional order weighted Sobolev spaces and have shown to be optimal in \cite{FP}  and \cite{FLP}. More precisely, using the notation 
$$
Z_{s,r}:=H^s(\R)\cap L^2(|x|^{2r}dx),\;\dot{Z}_{s,r}=Z_{s,r}\cap \{f\in L^1(\R):\widehat {f}(0)=0\},
$$
with $\,s,r>0$ one has the  results :\newline

(i) \cite{FP} The IVP \eqref{BO} is locally WP in $Z_{s,r}$ for $s\geq r\in [1,5/2)$ and if $\,u\in C([0,T]: Z_{5/2,2})$ is a solution of \eqref{BO}      s.t.  $u(\cdot, t_j)\in Z_{5/2,5/2}$, $\,j=1,2$ with $t_1,\,t_2\in [0,T],\,t_1\neq t_2$, then
$\,u\in C([0,T]: \dot Z_{5/2,2})$.\newline

(ii) \cite{FP} The IVP \eqref{BO} is locally WP in $\dot Z_{s,r}$  $s\geq r\in [5/2,7/2)$.\newline

(iii) \cite{FP}  If $\,u\in C([0,T]:\dot Z_{7/2,3})$ is a solution of \eqref{BO} s.t.    $\,\exists\, t_1,\,t_2, \,t_3\in [0,T],  \,t_1<t_2<t_3$  with $u(\cdot, t_j)\in Z_{7/2,7/2},\,j=1,2,3 $, then $\,u\equiv 0$.\newline

(iv) \cite{FLP}  The IVP \eqref{BO} has solutions $\,u\in C([0,T]:\dot Z_{7/2,3}),\,u\not\equiv 0,$ for which   $\,\exists\, t_1,\,t_2,\in [0,T],  \,t_1<t_2,$ with $u(\cdot, t_j)\in Z_{7/2,7/2}, \,j=1,2$.

\vskip.1in 
Our first main  result in this work is the following theorem:

\medskip

\begin{theorem}\label{TH1}
Let $u_1,\;u_2$ be solutions to the IVP \eqref{BO} for $ (x,t)\in \R\times [0,T]$ such that
 \begin{equation}
 \label{m1}
 u_1,\;u_2\in C([0,T]:H^s(\R)) \cap C^1((0,T):H^{s-2}(\R)),\;\;s>5/2.
\end{equation}
If there exists an open set $\,\Omega \subset \R\times [0,T]$ such that
\be\label{H1a}
u_1(x,t)=u_2(x,t),\;\;\;\;(x,t)\in\Omega,
\ee
then,
\be\label{result1}
u_1(x,t)=u_2(x,t),\;\;\;\;(x,t)\in \R\times [0,T].
\ee

In particular, if $u_1$ vanishes in $\Omega$, then $u_1\equiv 0$.
\end{theorem}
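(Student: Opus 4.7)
\medskip\noindent\textbf{Proof plan.} The strategy is to reduce the PDE uniqueness question on $\R\times[0,T]$ to a rigidity property of the Hilbert transform on each time slice, and then propagate by standard well-posedness for \eqref{BO}. Set $w:=u_1-u_2$; subtracting the two copies of \eqref{BO} gives the linear equation
\begin{equation*}
\partial_t w-\mathcal{H}\partial_x^2 w+\tfrac12\,\partial_x\!\big((u_1+u_2)\,w\big)=0 \qquad\text{on }\R\times[0,T].
\end{equation*}
Since $w\equiv 0$ on the open set $\Omega$, every partial derivative of $w$ vanishes on $\Omega$, so both $\partial_t w$ and $\partial_x((u_1+u_2)w)=(u_1+u_2)\partial_x w+\partial_x(u_1+u_2)\,w$ are identically zero there. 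The equation then forces the crucial additional identity
\begin{equation*}
\mathcal{H}\big(\partial_x^2 w(\cdot,t)\big)(x)=0\qquad\text{for every } (x,t)\in\Omega.
\end{equation*}

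Fix any $t_0$ for which the open slice $I:=\{x\in\R:(x,t_0)\in\Omega\}$ is nonempty, and set $f(x):=\partial_x^2 w(x,t_0)$. Because $s>5/2$, $f\in H^{s-2}(\R)\subset L^2(\R)\cap C_b(\R)$, and by the above \emph{both} $f$ and $\mathcal{H}f$ vanish on $I$. The function $F:=f+i\,\mathcal{H}f$ has Fourier transform supported in $[0,\infty)$, hence is the boundary value of a function in the Hardy space $H^2(\mathbb{C}_+)$, and this boundary trace vanishes on the set $I$ of positive measure. Classical Hardy-space rigidity (equivalently, $\log|F|\in L^1_{\mathrm{loc}}$ unless $F\equiv 0$) then forces $F\equiv 0$, so $\partial_x^2 w(\cdot,t_0)\equiv 0$ on $\R$; combined with $w(\cdot,t_0)\in L^2(\R)$ this yields $w(\cdot,t_0)\equiv 0$.

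Running this argument for every $t_0$ in the open time-projection of $\Omega$ produces an open subinterval $J\subset[0,T]$ along which $u_1\equiv u_2$. Picking any $t^*\in J$, since $u_1(\cdot,t^*)=u_2(\cdot,t^*)$, uniqueness for the BO Cauchy problem in the class $C([0,T]:H^s(\R))$, $s>5/2$, applied forward and backward in time from $t^*$, gives $u_1\equiv u_2$ on all of $\R\times[0,T]$, establishing \eqref{result1}.

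\medskip\noindent\textbf{Main obstacle.} The conceptual difficulty is the nonlocal operator $\mathcal{H}\partial_x^2$, which blocks any direct Carleman-style or ODE-based unique continuation argument. The plan circumvents this by exploiting the algebraic fact that on $\Omega$ \emph{both} $\partial_x^2 w$ and its Hilbert transform vanish, converting the PDE uniqueness question on a single time slice into the classical rigidity of $H^2(\mathbb{C}_+)$ boundary values. The remaining propagation in $t$ is a routine appeal to well-posedness in $H^s$ with $s>5/2$.
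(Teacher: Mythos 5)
Your proposal is correct and follows essentially the same route as the paper: subtract the two equations, observe that on $\Omega$ the equation forces $\partial_x^2 w$ and $\mathcal H\partial_x^2 w$ to vanish simultaneously on a spatial interval at a fixed time $t^*$, conclude $\partial_x^2 w(\cdot,t^*)\equiv 0$ from the analyticity of $f+i\mathcal Hf$ in the upper half-plane, and then propagate by $L^2$-uniqueness of the Cauchy problem. The only (harmless) differences are that you invoke Hardy-space boundary rigidity on a set of positive measure where the paper uses the more elementary Schwarz reflection principle on an interval, and that you make explicit the final well-posedness step that the paper leaves implicit.
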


\begin{remark}
\label{rr1}

(i) Under the same hypotheses, Theorem \ref{TH1} applies to solutions of the generalized BO equation
\be\label{gBO} 
\begin{aligned}
 \partial_t u - \mathcal H \partial_x^2  u + \partial_x f(u) = 0,\qquad (x,t) \in & ~ \R\times \R,
\end{aligned}
\ee
with $f:\R\to\R$  smooth enough and $f(0)=0$. In particular, it applies for $\,f(u)=u^k,\;k=2,3,4,...$ for which the well posedness of the associated IVP was considered in \cite{ABFS}, \cite{KeTa}, \cite{KeKo}, \cite{KPV1}, \cite{Vent1}, \cite{Vent2}, see also \cite{LiPo}. 
\vskip.1in

(ii) The hypothesis \eqref{m1} guarantees that the solutions satisfy the equation \eqref{BO} point-wise, which will be required in our proof.\vskip.1in

(iii) A similar result to that described in Theorem \ref{TH1}  for the IVP associated to the generalized Korteweg-de Vries equation
\be\label{gKdV} 
\begin{aligned}
 \partial_t u + \partial_x^3 u + \partial_x u^{k} = 0,\qquad (x,t) \in & ~ \R\times \R,\;k=2,3,....,
\end{aligned}
\ee
was established in \cite{SaSc}, and for some evolution equations of Schr\"odinger type in \cite{Iz}. In both cases, their proofs are based on appropriate forms of the so called Carleman estimates. Our proof of Theorem \ref{TH1} is elementary and relies on simple properties of the Hilbert transform as a boundary value of analytic functions.
\vskip.1in
(iv) We observe that the unique continuation in (iii) before the statement of Theorem \ref{TH1} applies to a single solution of the BO equation but not to any two solutions as in Theorem \ref{TH1}. This is due to the fact that the argument in the  proof there depends upon the whole symmetry structure of the BO equation. 

\vskip.1in
(v) Theorem \ref{TH1} can be seen as a corollary of the following linear result whose proof is exactly  the one given below for Theorem \ref{TH1} :

Assume that $\,k,\,j\in \Z^+\cup\{0\}$ and that 
$$
a_m :\R\times [0,T]\to\R,\;m=0,1,..., k,\;\;\,\text{and}\,\;\;\,b:\R\times [0,T]\to\R
$$
are  continuous functions with  $\,b(\cdot)$ never  vanishing on $(x,t)\in \R\times [0,T],$ and consider the IVP
\be\label{general} 
\begin{aligned}
\begin{cases}
&\displaystyle \partial_t w - b(x,t)\,\mathcal H\partial_x^j  w +\sum_{m=0}^ka_m(x,t)\partial_x^mw= 0,\\
\\
&w(x,0)=w_0(x).
\end{cases}
\end{aligned}
\ee
\end{remark}

\begin{theorem}\label{TH3}
 Let $$\,w\in C([0,T]:H^s(\R)) \cap C^1((0,T):H^{s-2}(\R)),\;\,\;\;s>\max\{k;j\}+1/2,$$ be a solution to the IVP  \eqref{general}. If there exists an open set $\,\Omega \subset \R\times [0,T]$ such that
\be\label{HA1}
w(x,t)=0,\;\;\;\;(x,t)\in\Omega,
\ee
then,
\be\label{result22}
w(x,t)=0\;\;\;\;(x,t)\in \R\times [0,T].
\ee
\end{theorem}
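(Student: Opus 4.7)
The plan is to use the vanishing of $w$ on the open set $\Omega$ together with the equation \eqref{general} to extract that $\mathcal H\partial_x^j w=0$ on $\Omega$, then exploit the fact that the Hilbert transform is the boundary value of a holomorphic function on a half-plane to kill $\partial_x^j w$, and hence $w$, on each time slice of $\Omega$; finally, linear uniqueness propagates this to all of $[0,T]$. The regularity $s>\max\{k,j\}+1/2$ makes every term in \eqref{general} a continuous function, so the equation holds pointwise. On $\Omega$ the function $w$ and all its partial derivatives in $x$ and $t$ vanish, so substitution reduces \eqref{general} to $b(x,t)\,\mathcal H\partial_x^j w(x,t)=0$; the nonvanishing of $b$ then gives $\mathcal H\partial_x^j w=0$ throughout $\Omega$.

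Fix $t_0$ in the (open, nonempty) time-projection $J$ of $\Omega$, let $I_{t_0}=\{x:(x,t_0)\in\Omega\}$, and put $f=\partial_x^j w(\cdot,t_0)\in L^2(\R)$ (using $s-j>1/2$). By the previous step, both $f$ and $\mathcal H f$ vanish on the open set $I_{t_0}$. The Cauchy integral
\[
F(z)=\frac{1}{2\pi i}\int_{\R}\frac{f(x')}{x'-z}\,dx'
\]
is holomorphic on $\C\setminus\R$; its upper nontangential boundary value is $\tfrac12(f+i\mathcal H f)$ and its jump across $\R$ equals $f$, by Plemelj-Sokhotski. Vanishing of $f$ on $I_{t_0}$ means $F$ extends continuously, hence holomorphically, across $I_{t_0}$, and vanishing of the upper trace on $I_{t_0}$ then forces $F\equiv 0$ on the extended (connected) domain by the identity theorem, so $f\equiv 0$ on $\R$. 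Since $\partial_x^j w(\cdot,t_0)\equiv 0$ and $w(\cdot,t_0)\in L^2(\R)$, the function $w(\cdot,t_0)$ is a polynomial of degree less than $j$ lying in $L^2$, hence $w(\cdot,t_0)\equiv 0$. Running $t_0$ over $J$ yields $w\equiv 0$ on $\R\times J$.

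To complete the proof we must extend the vanishing from the time-slab $\R\times J$ to all of $\R\times[0,T]$, and this is where I expect the main technical work to sit. Let $E=\{t\in[0,T]:w(\cdot,t)\equiv 0\}$; continuity in time makes $E$ closed, and $J\subset E$. Openness of $E$ should follow from forward and backward uniqueness for the linear IVP \eqref{general}, via an energy estimate on $\|w(\cdot,t)\|_{H^{s'}}^2$ for a suitable $s'\le s$, closed by commutator bounds between $b(x,t)\,\mathcal H\partial_x^j$ and the multipliers used to measure $H^{s'}$; these should yield $\tfrac{d}{dt}\|w(\cdot,t)\|_{H^{s'}}^2\le C\,\|w(\cdot,t)\|_{H^{s'}}^2$, and Gronwall then gives openness. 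Once $E$ is both open and closed in $[0,T]$ it equals $[0,T]$ and the theorem follows. The actual uniqueness content of the result is concentrated in the analytic-continuation step, which is the only place where the structure of $\mathcal H$ is really used.
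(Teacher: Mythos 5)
Your core argument is the same as the paper's: the paper proves Theorem \ref{TH3} by declaring its proof to be ``exactly the one given below for Theorem \ref{TH1}'', namely: all $x$- and $t$-derivatives of $w$ appearing in \eqref{general} vanish on $\Omega$, the equation and the nonvanishing of $b$ then force $\mathcal H\partial_x^j w=0$ on $\Omega$, and the rigidity statement ``$f$ and $\mathcal Hf$ both vanish on an open interval $\Rightarrow f\equiv 0$'' kills $\partial_x^j w(\cdot,t_0)$, hence $w(\cdot,t_0)$ (an $L^2$ polynomial). The only cosmetic difference is that you prove the rigidity step via the Cauchy integral and the Plemelj jump formula plus the identity theorem on $(\C\setminus\R)\cup I_{t_0}$, whereas the paper (Corollary \ref{col1}, Proposition \ref{pro1}) uses the harmonic extension of $f$ and Schwarz reflection in the upper half-plane; these are interchangeable, and your version is equally correct.

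The one place you diverge is the propagation from the slab $\R\times J$ to all of $[0,T]$. The paper simply stops once $w(\cdot,t^*)\equiv 0$, implicitly invoking (forward and backward) uniqueness for the IVP \eqref{general} as part of the standing well-posedness framework. Your proposed substitute --- a Gronwall inequality $\tfrac{d}{dt}\|w\|_{H^{s'}}^2\le C\|w\|_{H^{s'}}^2$ closed by commutator estimates --- does not hold in the stated generality: with merely continuous coefficients and $k\ge 2$, a term such as $a_2\partial_x^2 w$ with $a_2$ of unfavorable sign behaves like a backward heat operator and no such two-sided energy estimate is available, and even the commutator $[b,\mathcal H\partial_x^j]$ is only lower order if $b$ has some smoothness, which is not assumed. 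So you should not present that estimate as the mechanism; the honest statement (and the paper's implicit one) is that the unique continuation content of the theorem is the slice argument, and the extension to all of $[0,T]$ is delegated to whatever uniqueness theory makes ``solution of the IVP \eqref{general}'' meaningful. With that understood, your write-up matches the paper's proof.
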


\vskip.1in
\begin{remark}\label{ole}
(i)  In particular, applying Theorem \ref{TH3} to the difference of two solutions $\,u_1,\,u_2\,$ of  the  Burgers-Hilbert (BH) equation (see \cite{BiHu})
  \be
  \label{BH}
   \partial_t u - \mathcal H   u + u\partial_x u = 0,\qquad (x,t) \in  \, \R\times \R,
  \ee
   one sees that the result in Theorem \ref{TH1}, with $\,s>3/2$, holds for the IVP associated to the BH equation \eqref{BH}.
\vskip.1in

(ii) The result of Theorem \ref{TH1} extends to solutions of the  initial periodic boundary value problem (IPBVP) associated to the generalized BO equation
\be\label{gBO-PBVP} 
\begin{aligned}
\begin{cases}
& \partial_t u - \mathcal H \partial_x^2  u + \partial_x f(u) = 0,\qquad (x,t) \in \mathbb S^1\times \R,\\
&u(x,0)=u_0(x),
\end{cases}
\end{aligned}
\ee
with $ \,f(\cdot)\,$ as in part (i) of this remark. More precisely :

\end{remark}
\medskip

\begin{theorem}\label{TH2}
Let $u_1,\;u_2$ be solutions of the IPBVP  \eqref{gBO-PBVP} in $ (x,t)\in \mathbb S^1\times [0,T]$ such that
 \begin{equation}
 \label{m2}
 u_1,\;u_2\in C([0,T]:H^s(\mathbb S^1)) \cap C^1((0,T):H^{s-2}(\mathbb S^1)),\;s>5/2.
\end{equation}
If there exists an open set $\,\Omega \subset \mathbb S^1\times [0,T]$ such that
\be\label{HB1}
u_1(x,t)=u_2(x,t),\;\;\;\;(x,t)\in\Omega,
\ee
then,
\be\label{result}
u_1(x,t)=u_2(x,t),\;\;\;\;(x,t)\in \mathbb S^1\times [0,T].
\ee

In particular, if $u_1$ vanishes in $\Omega$, then $u_1\equiv 0$.
\end{theorem}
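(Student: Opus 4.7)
My plan is to mimic the proof of Theorem \ref{TH1}, using that on $\mathbb{S}^1$ the Hilbert transform $\mathcal{H}$ is still the imaginary boundary trace of a function holomorphic in the unit disk $\mathbb{D}$, with a Luzin--Privalov type uniqueness theorem replacing its half-plane analog. I begin with the standard linearization: set $w:=u_1-u_2$ and write $f(u_1)-f(u_2)=G(x,t)\,w$ with $G(x,t):=\int_0^1 f'(u_2+sw)\,ds$. Since $s>5/2$ and $f\in C^\infty(\mathbb{R})$, both $G$ and $\partial_x G$ lie in $C(\mathbb{S}^1\times[0,T])$, and after expanding $\partial_x(Gw)$ the difference $w$ satisfies, pointwise thanks to \eqref{m2},
\begin{equation*}
\partial_t w-\mathcal{H}\partial_x^2 w+G\,\partial_x w+(\partial_x G)\,w=0.
\end{equation*}

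Because $w\equiv0$ on the open set $\Omega$, every partial derivative of $w$ vanishes on $\Omega$, and the displayed linear equation then forces $\mathcal{H}\partial_x^2 w=0$ on $\Omega$ as well. Pick $t_0\in(0,T)$ and an open arc $I\subset\mathbb{S}^1$ with $I\times\{t_0\}\subset\Omega$, and set $v(x):=\partial_x^2 w(x,t_0)$. Then $v$ is continuous, real, of zero mean (being a second periodic derivative), and both $v$ and $\mathcal{H}v$ vanish on $I$. Writing $v(x)=\sum_{n\neq 0}\hat v(n)e^{inx}$ and identifying $\mathbb{S}^1$ with $\partial\mathbb{D}$ via $x\mapsto e^{ix}$, the function
\begin{equation*}
F(z):=\sum_{n\geq1}\hat v(n)\,z^n
\end{equation*}
lies in $H^2(\mathbb{D})$, and a direct Fourier computation gives $F(e^{ix})=\tfrac{1}{2}\bigl(v(x)+i\,\mathcal{H}v(x)\bigr)$. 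Hence the nontangential boundary values of $F$ vanish on the arc $I$, which has positive Lebesgue measure; the Luzin--Privalov uniqueness theorem for $H^2(\mathbb{D})$ then yields $F\equiv0$ in $\mathbb{D}$. Consequently $\hat v(n)=0$ for every $n\geq 1$, and by conjugation together with the zero-mean condition, $v\equiv0$ on $\mathbb{S}^1$.

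Thus $\partial_x^2 w(\cdot,t_0)\equiv0$, so $w(\cdot,t_0)$ must be constant in $x$ by periodicity, and this constant is zero since $w$ vanishes on $I$. This produces $u_1(\cdot,t_0)=u_2(\cdot,t_0)$ in $H^s(\mathbb{S}^1)$, after which the forward and backward uniqueness of the IPBVP \eqref{gBO-PBVP} in $H^s(\mathbb{S}^1)$ for $s>3/2$ (a standard $L^2$ energy estimate applied to the linearized equation above) extends the identity to all of $\mathbb{S}^1\times[0,T]$, giving \eqref{result}.

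The main obstacle is the middle step: justifying that the simultaneous vanishing of $v$ and $\mathcal{H}v$ on an arc forces $v\equiv0$. This is the circle analog of the half-plane argument used in Theorem \ref{TH1} and reduces to the fact that an $H^2(\mathbb{D})$ function whose nontangential boundary trace vanishes on a set of positive measure must be identically zero; once this ingredient is in place, the remaining steps parallel the real-line case, the only mild adjustment being that on $\mathbb{S}^1$ one must argue separately that an affine periodic function is constant in order to pass from $\partial_x^2 w(\cdot,t_0)\equiv 0$ to $w(\cdot,t_0)\equiv 0$.
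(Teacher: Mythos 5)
Your argument is correct and follows the same skeleton as the paper's: linearize the nonlinearity, use the vanishing of $w$ on the open set $\Omega$ together with the equation to conclude that $v=\partial_x^2w(\cdot,t_0)$ and $\mathcal H v$ both vanish on an arc, and then invoke a boundary--uniqueness theorem for the holomorphic extension $F(z)=\sum_{n\ge 1}\hat v(n)z^n$ to the disk. The one genuine difference is the complex--analysis ingredient. The paper proves a bespoke Proposition (its Proposition \ref{pro2}): a function continuous on $B_1(0)\cup A$, analytic in $B_1(0)$, and vanishing on an open boundary arc $A$ is identically zero; this is obtained from the half--plane Schwarz reflection statement (Proposition \ref{pro1}) by a fractional linear change of variables, and is entirely elementary. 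You instead invoke the Luzin--Privalov/$H^2(\mathbb D)$ uniqueness theorem (nontangential boundary trace vanishing on a set of positive measure forces $F\equiv 0$). Your tool is strictly stronger -- it needs only positive measure rather than an open arc and does not require continuity up to the boundary -- at the cost of citing a deeper classical theorem; since $v\in H^{s-2}(\mathbb S^1)$ with $s-2>1/2$ is continuous and vanishes on an open arc, the paper's lighter reflection argument suffices here. Two further points in your favor: you make explicit the mean--zero/conjugation step needed to recover $v\equiv0$ from $\hat v(n)=0$ for $n\ge1$, and you spell out the forward--and--backward $L^2$ energy/Gronwall argument that propagates $w(\cdot,t_0)\equiv0$ to all of $[0,T]$, a step the paper leaves implicit in its ``which completes the proof.''
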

\vskip.1in

\begin{remark}
The well-posedness of the initial IPBVP \eqref{gBO-PBVP} has been studied in \cite{Mo1}, \cite{Mo2} and  \cite{MoRi3}.
\end{remark}
\bigskip

Next, we consider the Intermediate Long Wave (ILW) equation
\be\label{ILW} 
\begin{aligned}
 \partial_t u - \mathcal L_\delta \partial_x^2  u + \frac1{\delta}\partial_x u  + u\partial_x u = 0,\qquad (x,t) \in & ~ \R\times \R,
\end{aligned}
\ee
where $u=u(x,t)$  is a real-valued function, $\,\delta>0\,$ and
\be\label{T}
\mathcal L_\delta f(x) :=-\frac1{2\delta}\,\rm{p.v.} \int \rm{coth}\it \left(\frac{\pi(x-y)}{2\delta}\right)f(y)dy.
\ee
Note that $\mathcal L_\delta$ is a multiplier operator with $\partial_x\mathcal L_\delta$ having symbol
\be
\label{symbol}
\sigma(\partial_x\mathcal L_\delta)=\widehat{\partial_x\mathcal L_\delta} =2\pi \xi \,\rm{coth}\,(2\pi \delta \xi).
\ee 
The ILW equation \eqref{ILW} describes  long internal gravity waves in a stratified fluid with finite depth represented by the parameter $\,\delta$, see \cite{KKD}, 
 \cite{Jo}, \cite{JE}.
 
 Also, the ILW equation has been proven to be complete integrable, see \cite{KSA} and \cite{KAS}.
 
In \cite{ABFS} it was proven that solutions of the ILW  as $\delta \to \infty$ (deep-water limit) converge to solutions of the BO equation with the same initial data.
 
Also, in \cite{ABFS} it was shown that  if $u_{\delta}(x,t)$ denotes the solution of the ILW equation \eqref{ILW}, then
\be
\label{scaleKdV}
v_{\delta}(x,t)=\,\frac{3}{\delta} \,u_{\delta}\big(x,\frac{3}{\delta} t\Big)
\ee
converges  as $\delta\to 0$ (shallow-water limit) to the solution  of the KdV equation, i.e.  \eqref{gKdV} with $k=2$, 
with the same initial data.

 For further comments on general properties of the ILW equation we refer to the recent survey \cite{Sa} and references therein.

The well-posedness of the IVP associated to the ILW equation \eqref{ILW} was studied in \cite{ABFS} and more recently in  \cite{MoVe}.

Our next theorem extends the result in Theorem \ref{TH1}  to solution of the IVP associated to the ILW\eqref{ILW}:

\begin{theorem}\label{TH5}
Let $u_1,\;u_2$ be solutions to \eqref{ILW} in $ (x,t)\in \R\times [0,T]$ such that
 \begin{equation}
 \label{m1a}
 u_1,\;u_2\in C([0,T]:H^s(\R)) \cap C^1((0,T):H^{s-2}(\R)),\;\;s>5/2.
\end{equation}
If there exists an open set $\,\Omega \subset \R\times [0,T]$ such that
\be\label{H1}
u_1(x,t)=u_2(x,t),\;\;\;\;(x,t)\in\Omega,
\ee
then,
\be\label{result2}
u_1(x,t)=u_2(x,t),\;\;\;\;(x,t)\in \R\times [0,T].
\ee

In particular, if $u_1$ vanishes in $\Omega$, then $u_1\equiv 0$.
\end{theorem}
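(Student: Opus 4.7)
The plan is to adapt the strategy used for Theorems \ref{TH1} and \ref{TH3}, replacing the role played by the Hilbert transform with an analogous role for $\mathcal L_\delta$. First, set $w=u_1-u_2$. Subtracting the two ILW equations and writing $u_1\partial_x u_1-u_2\partial_x u_2=\tfrac12\partial_x((u_1+u_2)w)$ gives the linear equation
\begin{equation*}
\partial_t w-\mathcal L_\delta\partial_x^2 w+\Bigl(\tfrac{1}{\delta}+\tfrac{u_1+u_2}{2}\Bigr)\partial_x w+\tfrac{1}{2}\,\partial_x(u_1+u_2)\,w=0,
\end{equation*}
with continuous coefficients built from $u_1,u_2$. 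Since $w$ vanishes on the open set $\Omega$, so do $\partial_t w$ and $\partial_x^m w$ for $m=0,1$, and the equation forces $\mathcal L_\delta\partial_x^2 w\equiv 0$ on $\Omega$. Hence, for each $t$ in a small open time-interval around a chosen $t_0$, the function $v(\cdot):=\partial_x^2 w(\cdot,t)\in H^{s-2}(\R)$ satisfies $v=0$ and $\mathcal L_\delta v=0$ on a common open interval $I\subset\R$.

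The core new ingredient is a strip analogue of the upper-half-plane representation of $\mathcal H$. Let $\Phi$ denote the bounded harmonic extension of $v$ to $\R\times(0,\delta)$ with $\Phi|_{y=0}=v$ and $\Phi|_{y=\delta}=0$; Fourier-wise $\widehat\Phi(\xi,y)=\widehat v(\xi)\sinh(2\pi|\xi|(\delta-y))/\sinh(2\pi|\xi|\delta)$, and a direct computation gives $-\partial_y\Phi|_{y=0}$ the symbol $2\pi\xi\coth(2\pi\delta\xi)$, i.e., $-\partial_y\Phi|_{y=0}=\partial_x\mathcal L_\delta v$ by \eqref{symbol}. The Cauchy--Riemann equations then yield $\partial_x\Psi|_{y=0}=\partial_x\mathcal L_\delta v$ for the harmonic conjugate $\Psi$, so after normalizing the additive constant, $\Psi|_{y=0}=\mathcal L_\delta v$. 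Thus $F:=\Phi+i\Psi$ is holomorphic in the strip and extends continuously to $y=0$ with $F|_{y=0}=v+i\mathcal L_\delta v$ (continuity from $s-2>1/2$). On $I$ this boundary value is zero, so by Schwarz reflection $F$ continues analytically across $I$ and vanishes on $I$. The identity theorem then forces $F\equiv 0$ throughout the extended connected domain; taking boundary values at $y=0$ gives $v\equiv 0$ on $\R$.

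Therefore $\partial_x^2 w(\cdot,t)\equiv 0$ for $t$ in a nontrivial time-interval, and since $w(\cdot,t)\in L^2(\R)$ we conclude $w(\cdot,t)\equiv 0$ there. Picking any such time $t^*$ and invoking uniqueness in the well-posedness theory for \eqref{ILW} (from \cite{ABFS}, \cite{MoVe}) propagates $w\equiv 0$ from $t^*$ to all of $\R\times[0,T]$, yielding $u_1\equiv u_2$. The main obstacle I anticipate is verifying cleanly the strip boundary-value identity $F|_{y=0}=v+i\mathcal L_\delta v$ at the available Sobolev regularity, and making sure the Schwarz reflection step is rigorously justified (continuity of the boundary value and connectedness of the extended domain); once this replacement for the half-plane analyticity used in Theorem \ref{TH1} is in place, the remainder of the argument is structurally identical to the BO proof.
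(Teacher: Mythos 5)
Your proposal is correct and follows the paper's overall skeleton exactly: reduce to the statement that a function $v$ which vanishes together with $\mathcal L_\delta v$ on an open interval must vanish identically, by realizing $v+i\mathcal L_\delta v$ as the boundary value of a function holomorphic in a horizontal strip and applying Schwarz reflection (Proposition \ref{pro1}). Where you genuinely differ is in how that holomorphic function is produced. The paper (Corollary \ref{col11}) takes $f=\partial_x w(\cdot,t^*)$, computes the Fourier transform of $F=\partial_x f+i\mathcal L_\delta\partial_x f$ in closed form, $\widehat F(\xi)=-4\pi i\xi\,e^{4\pi\delta\xi}(1-e^{4\pi\delta\xi})^{-1}\widehat f(\xi)$, reads off that $\widehat F\in L^1$ with exponential decay for $\xi<0$, and writes the analytic continuation directly as the Fourier integral $F(x+iy)=\int e^{2\pi i\xi(x+iy)}\widehat F(\xi)\,d\xi$ on the strip $0<y<2\delta$; continuity up to $y=0$ and the cancellation of the $\xi=0$ singularity of $\coth$ are immediate from this formula. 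You instead build the harmonic extension $\Phi$ of $v=\partial_x^2w(\cdot,t^*)$ to $\R\times(0,\delta)$ with zero data on $y=\delta$, identify $-\partial_y\Phi|_{y=0}$ with $\partial_x\mathcal L_\delta v$ via \eqref{symbol}, and pass to the harmonic conjugate. Your multiplier computation is consistent (one can check that $\Re F=\Phi$ for the paper's $F$), so the route works, but it obliges you to justify the additive normalization of $\Psi$, the boundary continuity of the conjugate, and the fact that $\mathcal L_\delta v$ is well defined despite the $1/\xi$ singularity of the symbol $-i\coth(2\pi\delta\xi)$ at the origin --- the last point is precisely why the paper applies the operator only to derivatives; it does hold for your $v=\partial_x^2w$ since $\widehat v(\xi)=-4\pi^2\xi^2\widehat w(\xi)$, but it deserves an explicit sentence. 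The paper's Fourier-side formula settles all of these issues at once, which is what it buys over your construction; what your construction buys is a more geometric picture of $\mathcal L_\delta$ as a Dirichlet-to-Neumann type map on the strip. Your explicit appeal to uniqueness for the IVP to propagate $w(\cdot,t^*)\equiv 0$ to all of $[0,T]$ is a detail the paper leaves implicit.
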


\begin{remark}

The observations in (i) and (v) in Remark \ref{rr1} and (ii) in Remark \ref{ole} apply, after  some simple modifications, to the ILW equation \eqref{ILW}.
\end{remark}
\vskip.1in

Next, we present the following slight improvement of Theorem \ref{TH1} and Theorem \ref{TH2} :

\begin{theorem}\label{TH10}
Let $u_1,\;u_2$ be solutions to \eqref{BO} in $ (x,t)\in \R\times [0,T]$ such that
 \begin{equation}
 \label{m10}
 u_1,\;u_2\in C([0,T]:H^s(\R)) \cap C^1((0,T):H^{s-2}(\R)),\;\;s>5/2.
\end{equation}
If there exists an open set $\,I \subset \R,\;0\in I\,$ such that
\be\label{H10}
u_1(x,0)=u_2(x,0),\;\;\;\;\;\;\;\;x\in I,
\ee
and for each $\,N\in\Z^+$
\begin{equation}
\label{H15}
\int_{|x|\leq R}\;|\partial_tu_1(x,0)-\partial_tu_2(x,0)|^2dx\leq c_N\,R^N\;\;\;\;\;\;\text{as}\;\;\;\;\;R \downarrow \,0,
\end{equation}
then,
\be\label{result11}
u_1(x,t)=u_2(x,t),\;\;\;\;(x,t)\in \R\times [0,T].
\ee

\end{theorem}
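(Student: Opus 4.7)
Set $w := u_1 - u_2$. A direct computation shows that $w$ satisfies the linear equation
\[
\partial_t w - \mathcal{H}\partial_x^2 w + \tfrac{1}{2}(u_1+u_2)\,\partial_x w + \tfrac{1}{2}\partial_x(u_1+u_2)\,w = 0,
\]
which is of the form \eqref{general} with $b\equiv 1$, $j=2$. The hypothesis \eqref{H10} gives $w(\cdot,0)\equiv 0$ on the open interval $I \ni 0$; hence also $\partial_x w(\cdot,0)\equiv 0$ and $\partial_x^2 w(\cdot,0)\equiv 0$ on $I$, using that $s>5/2$ makes these classical derivatives well-defined. Evaluating the PDE pointwise at $t=0$ and $x\in I$, the two lower-order terms vanish and we are left with the bare identity
\[
\partial_t w(x,0) \;=\; \mathcal{H}\,\partial_x^2 w(x,0),\qquad x\in I.
\]

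Put $g := \partial_x^2 w(\cdot,0)\in H^{s-2}(\R)\subset L^2(\R)$, which vanishes on $I$. For $x$ in the interior of $I$, the defining integral
\[
\mathcal{H}g(x) \;=\; \frac{1}{\pi}\int_{y\notin I}\frac{g(y)}{x-y}\,dy
\]
has no singularity, and the same expression extends holomorphically to a complex neighborhood of $x$; thus $\mathcal{H}g$ is real analytic on $I$. On $I$ it coincides with $\partial_t w(\cdot,0)$, so the assumption \eqref{H15} becomes
\[
\int_{|x|\leq R}|\mathcal{H}g(x)|^2\,dx \;\leq\; c_N R^N\qquad\text{as } R\downarrow 0,
\]
for every $N\in\Z^+$. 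A direct comparison with the Taylor expansion of $\mathcal{H}g$ at $0$ forces every Taylor coefficient to vanish, and hence $\mathcal{H}g\equiv 0$ on some open neighborhood $J\subset I$ of $0$.

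We now have $g\in L^2(\R,\R)$ with $g\equiv 0$ and $\mathcal{H}g\equiv 0$ on the non-empty open set $J$. Consider the Cauchy integral
\[
F(z) \;:=\; \frac{1}{2\pi i}\int_{\R}\frac{g(y)}{y-z}\,dy,\qquad z\in\C\setminus\R.
\]
Since $g$ vanishes on $J$, $F$ extends holomorphically across $J$, producing a function analytic on the connected open set $\C\setminus(\R\setminus J)$. Its non-tangential boundary value from the upper half plane is $\tfrac12(g+i\mathcal{H}g)$ by Sokhotski--Plemelj, which is identically zero on $J$. The identity principle therefore gives $F\equiv 0$, and hence $g\equiv 0$ on $\R$; this is the well-known unique continuation property for the Hilbert transform. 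Consequently $\partial_x^2 w(\cdot,0)\equiv 0$, and combined with $w(\cdot,0)\in H^s(\R)$ this forces $w(\cdot,0)\equiv 0$ on $\R$. Standard uniqueness for the BO initial value problem at the level $s>5/2$ then yields $u_1\equiv u_2$ on $\R\times[0,T]$.

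The step I expect to be genuinely delicate is the passage from the $L^2$-decay hypothesis \eqref{H15} to the pointwise vanishing of the analytic function $\mathcal{H}g$ on an open neighborhood of $0$; once that is in hand, the Hilbert-transform unique continuation via the Cauchy integral is classical. The crucial structural observation is that the equation, when restricted to $t=0$ and $x\in I$, degenerates to the clean relation $\partial_t w = \mathcal{H}\partial_x^2 w$, so that the temporal information \eqref{H15} is converted directly into information about $\mathcal{H}g$ rather than about $g$ itself.
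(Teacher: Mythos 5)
Your proposal is correct and follows essentially the same route as the paper: you restrict the difference equation to $t=0$ on $I$ to obtain $\partial_t w(\cdot,0)=\mathcal H\partial_x^2w(\cdot,0)$ there, use the real analyticity of $\mathcal H g$ on $I$ (where $g=\partial_x^2w(\cdot,0)$ vanishes) to convert the $L^2$-decay hypothesis \eqref{H15} into the vanishing of all Taylor coefficients of $\mathcal Hg$ at $0$, and then invoke unique continuation for the pair $(g,\mathcal Hg)$ via analytic extension to the upper half-plane. The only cosmetic differences are that the paper packages the last two steps into Lemma \ref{lemma1} using Schwarz reflection rather than the Cauchy integral and Sokhotski--Plemelj, and that you spell out the final (implicit in the paper) passage from $\partial_x^2w(\cdot,0)\equiv 0$ to $w(\cdot,0)\equiv 0$ and thence to $u_1\equiv u_2$.
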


\medskip

\begin{theorem}\label{TH11}
Let $u_1,\;u_2$ be solutions of the IPBVP  \eqref{gBO-PBVP} in $ (x,t)\in \mathbb S^1\times [0,T]\simeq \R/\Z\times[0,T]$ such that
 \begin{equation}
 \label{m22}
 u_1,\;u_2\in C([0,T]:H^s(\mathbb S^1)) \cap C^1((0,T):H^{s-2}(\mathbb S^1)),\;s>5/2.
\end{equation}
If there exists an open set $\,I \subset [-1/2,1/2]$ with $\,0\in I$ such that
\be\label{HB12}
u_1(x,0)=u_2(x,0),\;\;\;\;x\in I,
\ee
and for each $\,N\in\Z^+$
\begin{equation}
\label{H20}
\int_{|x|\leq R}\,\;|\partial_tu_1(x,0)-\partial_tu_2(x,0)|^2d\theta\leq c_N\,R^N\;\;\;\;\;\;\text{as}\;\;\;\;\;R \downarrow \,0,
\end{equation}

then,
\be\label{result23}
u_1(x,t)=u_2(x,t),\;\;\;\;(x,t)\in \mathbb S^1\times [0,T].
\ee

\end{theorem}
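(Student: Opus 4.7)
My plan is to force $w(\cdot,0)\equiv 0$ on all of $\mathbb S^1$, where $w:=u_1-u_2$, and then invoke uniqueness for the IPBVP \eqref{gBO-PBVP}. Writing $f(u_1)-f(u_2)=g(x,t)\,w$ with $g(x,t):=\int_0^1 f'(s u_1+(1-s)u_2)\,ds$ continuous, the difference satisfies
\[
\partial_t w-\mathcal H\partial_x^2 w+\partial_x(g\,w)=0\qquad\text{on }\mathbb S^1\times[0,T].
\]
By \eqref{HB12}, the functions $w,\partial_x w,\partial_x^2 w$ and $\partial_x(g\,w)$ all vanish on the open arc $I$ at $t=0$, so the equation collapses there to the pointwise identity
\[
\partial_t w(x,0)=\mathcal H[\partial_x^2 w(\cdot,0)](x),\qquad x\in I.
\]

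Next I would set $h:=\partial_x^2 w(\cdot,0)\in H^{s-2}(\mathbb S^1)\subset C(\mathbb S^1)$ (using $s-2>1/2$), note $h\equiv 0$ on $I$, and use that the kernel $\cot(\pi(x-y))$ of the periodic Hilbert transform is real-analytic in $x$ for $y$ outside a neighbourhood of $x$; since $h$ vanishes in such a neighbourhood whenever $x$ sits in a compact subset of $I$, the function $\mathcal H h$ is real-analytic on $I$. Transferring \eqref{H20} through the identity above yields
\[
\int_{|x|\le R}|\mathcal H h(x)|^2\,dx\le c_N R^N\qquad\text{for every }N\in\Z^+.
\]
A nonzero Taylor coefficient of order $n_0$ for the analytic function $\mathcal H h$ at $0$ would make the left side of order $R^{2n_0+1}$; hence the Taylor series of $\mathcal H h$ at $0$ vanishes identically and $\mathcal H h\equiv 0$ on the component of $I$ containing $0$, which I relabel as $I$.

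So both $h$ and $\mathcal H h$ vanish on the arc $I$. Form the Hardy space function $F(z):=\hat h(0)+2\sum_{n\ge 1}\hat h(n)\,z^n\in H^2(\mathbb D)$: its nontangential boundary value is $h+i\mathcal H h$, which vanishes on $I$, a set of positive measure. The theorem of F.\ and M.\ Riesz then forces $F\equiv 0$, so $\hat h(n)=0$ for all $n\ge 0$, and reality of $h$ gives $h\equiv 0$ on $\mathbb S^1$. Thus $\partial_x^2 w(\cdot,0)\equiv 0$; periodicity makes $w(\cdot,0)$ constant, and its vanishing on $I$ identifies that constant as $0$. Uniqueness for \eqref{gBO-PBVP} then gives \eqref{result23}.

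I expect the main obstacle to be the analyticity-plus-infinite-order-vanishing step that converts the single-point hypothesis \eqref{H20} into global vanishing of $\mathcal H h$ on $I$. The cancellation of the nonlinear terms at $t=0$ on $I$ is crucial here: it is what makes the right-hand side of the governing identity the Hilbert transform of a function that is zero near $I$, hence real-analytic there. The remaining Hardy space/F.\ and M.\ Riesz step is standard and parallels, with the disc replaced by the upper half-plane, the argument needed for Theorem \ref{TH10}.
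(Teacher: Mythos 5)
Your argument is correct and follows the same strategy the paper intends for Theorem \ref{TH11} (which it derives by transplanting the proof of Theorem \ref{TH10} and Lemma \ref{lemma1} to the circle): use the difference equation at $t=0$ on $I$ to identify $\partial_t w(\cdot,0)$ with the conjugate function of $h=\partial_x^2w(\cdot,0)$, convert \eqref{H20} into infinite--order vanishing of that real-analytic function at $0$, and finish with complex analysis. The only substantive divergence is in the complex-analytic tools: the paper's Lemma \ref{lemma1} reflects $F=-\mathcal Hf+if$ across the arc by Schwarz's principle and applies the identity theorem directly from the vanishing of all derivatives at the single point $0$, whereas you first prove real-analyticity of $\mathcal Hh$ on $I$ from the $\cot$-kernel representation, deduce $\mathcal Hh\equiv 0$ on the component of $I$ containing $0$, and then invoke boundary uniqueness for $H^2(\mathbb D)$ (vanishing on a set of positive measure) --- a heavier theorem than needed, since with $h$ and $\mathcal Hh$ both continuous and vanishing on an arc, Proposition \ref{pro2} of the paper already gives $h\equiv 0$ by reflection. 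Your version is, on the other hand, more complete than the paper's sketch at the endgame: you spell out that $\partial_x^2w(\cdot,0)\equiv 0$ plus periodicity forces $w(\cdot,0)$ to be a constant, hence zero, and that $L^2$-uniqueness for the IPBVP then propagates this to all $t$; the paper leaves both of these steps implicit.
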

\vskip.1in

\begin{remark}
It will be clear form our proof of Theorem \ref{TH10} that a similar argument provides the proof of Theorem \ref{TH11} which will be omitted.
\end{remark}

The rest of this paper is organized as follows : section 2 contains some preliminary estimates required for Theorem \ref{TH1} as well as its proof. It also includes the modification needed to extend the argument in the proof of Theorem \ref{TH1} from the IVP to the IPBVP to prove Theorem \ref{TH2}. Section 3 contains the proof of Theorem \ref{TH5}, and section 4 consists of the proof of Theorem \ref{TH10}.

\section{Proof of Theorem \ref{TH1}}

To prove Theorem \ref{TH1} we need the following result from complex analysis whose proof follows directly from Schwarz reflection principle:

\begin{proposition}
\label{pro1}

Let $I\subseteq \R$ be an open interval, $\,b\in(0,\infty]$  and
\be\label{sets}
D_b=\{z=x+iy\in \C:0<y<b\},\;\;L=\{x+i0\in \C:x\in I\}.
\ee
Let $F:D_b\cup L\to\C$ be a continuous function such that $\,F \big|_{D_b}$ is analytic. If $\,F \big|_{L}\equiv 0$, then $\,F\equiv 0$.
\end{proposition}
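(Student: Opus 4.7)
The plan is to apply the Schwarz reflection principle exactly as the authors suggest. Since $F$ is continuous on $D_b\cup L$ and analytic on $D_b$, and since $F$ is real-valued (in fact identically zero, hence automatically real-valued) on the interval $L$ of the real axis, Schwarz reflection gives me an analytic extension to a domain that includes $L$ as an interior set.

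Concretely, first I would introduce the reflected strip $D_b^- := \{z = x + iy \in \C : -b < y < 0\}$ and define
\[
\tilde F(z) := \begin{cases} F(z), & z\in D_b \cup L,\\ \overline{F(\bar z)}, & z\in D_b^-. \end{cases}
\]
Since $F|_L \equiv 0$, the two definitions agree on $L$, and $\tilde F$ is continuous on the full open strip $\tilde D_b := D_b^- \cup \{x+i0 : x\in I\} \cup D_b$ (as a set, we have thickened $L$ into an open subset of the real line by taking an open neighborhood in the new domain). The standard Schwarz reflection argument (apply Morera's theorem on small rectangles straddling $L$, using continuity across $L$ together with analyticity on each side) then shows that $\tilde F$ is holomorphic on the open set $\tilde D_b$.

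Next I would invoke the identity theorem: $\tilde F$ is a holomorphic function on the connected open set $\tilde D_b$ that vanishes on the open interval $L\subset \tilde D_b$, hence $\tilde F \equiv 0$ on $\tilde D_b$. Restricting back, $F \equiv 0$ on $D_b$, and by the hypothesis $F|_L \equiv 0$ we conclude $F \equiv 0$ on $D_b\cup L$.

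There is no real obstacle here; the only mild point to verify is the Schwarz reflection step itself, which requires the hypothesis that $F$ extends continuously to $L$ and is real (indeed zero) on $L$, both of which are given. The connectedness of $\tilde D_b$ (which is an open strip symmetric about the real axis, containing the interval $L$) is automatic, so the identity theorem applies without complication.
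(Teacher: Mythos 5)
Your proof is correct and is exactly the argument the paper intends: the authors state that Proposition \ref{pro1} ``follows directly from Schwarz reflection principle,'' and your write-up simply carries out that reflection (via Morera across $L$) and then applies the identity theorem to the zero set $L$, which has limit points in the doubled strip. No gaps.
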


As a consequence we have

\begin{corollary}
\label{col1}
Let $f\in H^s(\R),\,s>1/2$ be a real valued function. If there exists an open set $I\subset \R$ such that $$f(x)=\mathcal Hf(x)=0,\;\;\;\;\;\;\;\forall \,x\in I,
$$
then $f\equiv 0$.

\end{corollary}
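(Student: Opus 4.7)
The plan is to realize $f+i\,\mathcal H f$ as the boundary value on $\R$ of a function analytic in the upper half plane, and then invoke Proposition \ref{pro1}.

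Concretely, I would introduce the Cauchy integral
\[
F(z):=\frac{1}{\pi i}\int_{\R}\frac{f(t)}{t-z}\,dt,\qquad z\in D_{\infty}=\{z\in\C:\operatorname{Im} z>0\}.
\]
Since $f\in H^s(\R)\subset L^2(\R)$, the integral converges absolutely for each $z\in D_{\infty}$ and, by differentiation under the integral sign, defines an analytic function there. Writing $F=u+iv$, the real and imaginary parts are the Poisson extensions of $f$ and $\mathcal H f$ respectively, and are harmonic conjugates.

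Next I would check that $F$ extends continuously to $D_{\infty}\cup\R$ with
\[
\lim_{y\downarrow 0}F(x+iy)=f(x)+i\,\mathcal H f(x),\qquad x\in\R,
\]
which is the standard Plemelj--Sokhotski identity; the key input is that $s>1/2$ gives the Sobolev embedding $H^s(\R)\hookrightarrow C_0(\R)$, so both $f$ and $\mathcal H f$ are bounded and uniformly continuous, and the Poisson extension of such a function is classically continuous up to the boundary.

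The hypothesis $f\equiv\mathcal H f\equiv 0$ on $I$ then translates into $F\equiv 0$ on the segment $L=\{x+i0:x\in I\}$. Proposition \ref{pro1}, applied with $b=\infty$, forces $F\equiv 0$ on all of $D_{\infty}\cup L$. Taking the boundary limit at an arbitrary $x\in\R$ yields $f(x)+i\,\mathcal H f(x)=0$, and since $f$ is real-valued this gives $f\equiv 0$. The only delicate point is the continuous extension of $F$ up to $\R$; if one wishes to avoid invoking the Poisson boundary fact directly, one can mollify $f$ by $\rho_{\epsilon}\in C_c^{\infty}(\R)$, observe that the Cauchy integrals of Schwartz functions are smooth up to the boundary, and pass to the limit using the uniform convergence of $f\ast\rho_{\epsilon}\to f$ and $\mathcal H f\ast\rho_{\epsilon}\to\mathcal H f$ on $\R$ (again afforded by $s>1/2$).
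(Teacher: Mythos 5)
Your proposal is correct and follows essentially the same route as the paper: both realize $f+i\,\mathcal H f$ as the continuous boundary value of a function analytic in the upper half-plane (your Cauchy integral is exactly the paper's harmonic extension $U$ plus $i$ times its conjugate $V$) and then invoke Proposition \ref{pro1}. Your extra care with the boundary continuity via $H^s(\R)\hookrightarrow C_0(\R)$ for $s>1/2$ is a welcome elaboration of a point the paper treats briefly.
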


\begin{proof}
Denoting $U=U(x,y)$ the harmonic extension  of $f$ to the upper half-plane $D$, one sees that its harmonic conjugate $V=V(x,y)$ has boundary value $V(x,0)=\mathcal Hf(x)$ with 
\be
\label{HT}
(\widehat{f+i\mathcal H f})(\xi)=2\,\chi_{[0,\infty)}(\xi)\,\widehat{f}(\xi),\;\;\;\;\;\;\;\;\widehat{f}\in L^1(\R).
\ee
Thus, $F:=U+iV$ is continuous on $\overline D_{\infty}$ and analytic on $D_{\infty}$ with $\,F \big|_{L}\equiv 0$. Hence, Proposition \ref{pro1} yields the desired result

\end{proof}

\begin{proof} [Proof of Theorem \ref{TH1} ] Defining $w(x,t)=(u_1-u_2)(x,t)$ one has  that
\be\label{eq1}
\partial_tw-\mathcal H \partial_x^2 w+\partial_xu_2\,w+u_1\,\partial_xw=0,\;\;\;(x,t)\in\R\times[0,T].
\ee

By hypotheses  \eqref{m1} and \eqref{H1} there exist open intervals $I,\,J\subset \R$ such that
\be\label{zeros1}
\begin{aligned}
w(x,t)&=\partial_xw(x,t)\\
&=\partial_tw(x,t)=\partial_x^2w(x,t)=0,\;\;\;\;\;\;\;\;\;\;(x,t)\in I\times J\subset \Omega.
\end{aligned}
\ee

Thus, the equation \eqref{eq1} tells us
\be\label{zeros2}
\mathcal H \partial^2_xw(x,t)=0,\;\;(x,t)\in I\times J\subset \Omega.
\ee

Combining \eqref{zeros1} and \eqref{zeros2} and fixing $t^*\in J$  it follows that 
\be\label{zeros3}
\partial_x^2w(x,t^*)=\mathcal H \partial^2_xw(x,t^*)=0,\;\;x\in I,
\ee
with $\,\partial_x^2w(\cdot,t^*),\;\mathcal H \partial^2_xw(\cdot,t^*)\in H^s(\R)$, $\,s>1/2$.

\vskip.1in Therefore, using Corollary \ref{col1} one has that $\,\partial_x^2w(\cdot,t^*)\equiv 0$ which implies that $\,w(\cdot,t^*)\equiv 0$ and completes the proof.

\end{proof}

 To extend the previous argument to prove Theorem \ref{TH2} we need the following result from complex analysis :
\begin{proposition}
\label{pro2}
Let  $\,J\subset [-\pi, \pi]$ be an open non-empty interval and 
$$
B_1(0)=\{z=x+iy\in \C : |z|<1\},\;\,A=\{ z\in \C : |z|=1,\,\arg(z)\in J\}.
$$

Let  $ \,F: B_1(0)\cup A\to \C$ be a continuous function  such that $\,F\big |_{B_1(0)}$ is analytic.

If $\,F\big |_A\equiv 0$, then $\,F\equiv 0$.

\end{proposition}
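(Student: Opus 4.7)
The plan is to reduce Proposition \ref{pro2} to Proposition \ref{pro1} by composing with a Möbius transformation that sends the unit disk onto the upper half-plane and the arc $A$ onto an interval of the real line.

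Concretely, since $J$ is a proper open subinterval of $[-\pi,\pi]$, I can choose a point $e^{i\theta_1}$ on the unit circle with $e^{i\theta_1}\notin A$. Set
\[
\varphi(z) = i\,\frac{e^{i\theta_1}+z}{e^{i\theta_1}-z}.
\]
This Möbius map is a biholomorphism from $B_1(0)$ onto the upper half-plane $D_\infty$ that extends to a homeomorphism of $\overline{B_1(0)}\setminus\{e^{i\theta_1}\}$ onto $\overline{D_\infty}$, sending $e^{i\theta_1}$ to $\infty$. Because Möbius transformations carry generalized circles to generalized circles, it maps the unit circle minus $\{e^{i\theta_1}\}$ homeomorphically onto $\R$. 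Consequently, the image $L:=\varphi(A)$ of the relatively open connected arc $A$ is a connected open subset of $\R$, i.e., an open interval (possibly unbounded, which is still admissible in Proposition \ref{pro1}).

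With this in hand, the function $G:=F\circ\varphi^{-1}$ is defined and continuous on $D_\infty\cup L$, analytic on $D_\infty$, and vanishes identically on $L$ since $F|_A\equiv 0$. Applying Proposition \ref{pro1} with $b=\infty$ yields $G\equiv 0$ on $D_\infty\cup L$, and pulling back by $\varphi$ gives $F\equiv 0$ on $B_1(0)\cup A$.

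The core ingredients are standard facts about Möbius transformations, so I do not foresee a genuine obstacle: the only care required is to verify that $\varphi(A)$ really is an interval (immediate from $A$ being connected and relatively open in the unit circle, plus $\varphi$ being a homeomorphism on the relevant set) and that $G$ is continuous up to $L$ (immediate from $e^{i\theta_1}\notin A$, so that $\varphi$ remains bounded and continuous on $A$).
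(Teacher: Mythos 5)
Your proof is correct and follows essentially the same route as the paper, which also reduces Proposition \ref{pro2} to Proposition \ref{pro1} by composing $F$ with a fractional linear transformation between the upper half-plane and the unit disk. Your write-up simply supplies the details (choice of the pole $e^{i\theta_1}\notin A$, the image of $A$ being an open interval, continuity up to the boundary) that the paper leaves implicit.
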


\begin{proof} The proof  follows from Proposition \ref{pro1} by considering $\,F_oT(z)$ where $\,T\,$ is a fractional linear transformation mapping the upper half-plane  to the unit disk $B_1(0)$.

\end{proof}

\section{Proof of Theorem \ref{TH5}}

First, we shall prove the following result :

\begin{corollary}
\label{col11}
Let $f\in H^s(\R),\,s>3/2$ be a real valued function. If there exists an open set $I\subset \R$ such that 
$$
f(x)=\mathcal L_{\delta}\partial_x f(x)=0,\;\;\;\;\;\;\;\forall \,x\in I,
$$
with $\,\mathcal L_{\delta} \,$ as in \eqref{T}, \eqref{symbol}, then $f\equiv 0$.

\end{corollary}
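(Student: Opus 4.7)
The plan is to encode $f$ and $\mathcal{L}_\delta f$ as the real and imaginary parts of a single analytic function on a horizontal strip, and then apply Proposition \ref{pro1}. The role of the upper half plane in Corollary \ref{col1} will be played by the strip $S_\delta = \{x + iy \in \C : 0 < y < \delta\}$, whose geometry reflects the finite depth $\delta$ built into $\mathcal{L}_\delta$.

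I would first construct the harmonic extension $U$ of $f$ to $S_\delta$ solving the Dirichlet problem $U(x, 0) = f(x)$, $U(x, \delta) = 0$. A Fourier computation in $x$ yields
$$\hat{U}(\xi, y) = \hat{f}(\xi) \big[\cosh(2\pi|\xi|y) - \coth(2\pi\delta|\xi|)\sinh(2\pi|\xi|y)\big],$$
and hence the bottom Dirichlet-to-Neumann identity
$$\partial_y U(x, 0) = -\partial_x\mathcal{L}_\delta f(x),$$
since the associated multiplier $-2\pi|\xi|\coth(2\pi\delta|\xi|)$ coincides with $-\sigma(\partial_x\mathcal{L}_\delta)(\xi)$ from \eqref{symbol} (using that $\xi\coth(\delta\xi)$ is even). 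I would then take a harmonic conjugate $V$ of $U$ on the simply connected strip, so that $F := U + iV$ is holomorphic on $S_\delta$; the Cauchy-Riemann identity $\partial_x V = -\partial_y U$ evaluated at $y = 0$ gives $\partial_x V(x, 0) = \partial_x\mathcal{L}_\delta f(x)$.

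The hypotheses yield $U(x, 0) = f(x) = 0$ and $\partial_x V(x, 0) = \partial_x\mathcal{L}_\delta f(x) = 0$ on $I$, so $V(\cdot, 0)$ is constant on $I$, say equal to $c$. Hence $F - ic$ is analytic on $S_\delta$, continuous on $S_\delta \cup I$, and vanishes identically on $I$, so Proposition \ref{pro1} applied with $b = \delta$ and $L = I$ forces $F \equiv ic$ on $S_\delta$; taking real parts at $y = 0$ gives $f \equiv 0$ on $\R$.

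The main technical point to watch is the boundary continuity of the harmonic conjugate $V$ near $I$: the singularity of $\coth(2\pi\delta\xi)$ at $\xi = 0$ means that $\mathcal{L}_\delta f$ itself need not be bounded, so $V(\cdot, 0)$ cannot simply be set equal to $\mathcal{L}_\delta f$ globally. However, under the hypothesis $s > 3/2$ one has $\partial_x\mathcal{L}_\delta f \in H^{s-1}(\R) \hookrightarrow C^0(\R)$, so normalizing $V(x_0, 0) = 0$ at some $x_0 \in I$ and defining $V(x, 0) = \int_{x_0}^x \partial_x\mathcal{L}_\delta f(s)\,ds$ locally produces a continuous boundary value near $I$, which is exactly what Proposition \ref{pro1} requires.
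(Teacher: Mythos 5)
Your argument is correct, but it reaches Proposition \ref{pro1} by a genuinely different construction than the paper's. You solve the Dirichlet problem for $f$ on the strip $\{0<y<\delta\}$ with zero data on the top wall, identify $\partial_x\mathcal L_\delta$ with (minus) the Dirichlet-to-Neumann map at the bottom, and pass to a harmonic conjugate --- in effect transplanting the proof of Corollary \ref{col1} from the half-plane to the finite-depth strip. The paper instead works with the single function $F=\partial_xf+i\,\mathcal L_\delta\partial_xf$ on the line, computes
$\widehat F(\xi)=-4\pi i\xi\,e^{4\pi\delta\xi}\,(1-e^{4\pi\delta\xi})^{-1}\widehat f(\xi)$,
observes that $\widehat F\in L^1(\R)$ (this is where $s>3/2$ enters, through $\xi\widehat f\in L^1$) and decays exponentially for $\xi<0$, and reads off directly from the Fourier inversion formula an analytic extension of $F$ to the strip $\{0<y<2\delta\}$, continuous on $\{0\le y<2\delta\}$; the hypotheses kill $F$ on $I$, Proposition \ref{pro1} gives $F\equiv0$, hence $\partial_xf\equiv0$ and $f\equiv0$ since $f\in L^2$. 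The two constructions are closely related --- the boundary value of your $\partial_xF$ at $y=0$ is exactly the paper's $F$ --- but the paper's route is shorter and entirely sidesteps the boundary-continuity issue for the harmonic conjugate that you rightly flag at the end: with exponential decay of $\widehat F$ on $(-\infty,0)$ and integrability on $(0,\infty)$, continuity up to $y=0$ is immediate by dominated convergence, and no local antiderivative normalization is needed. What your version buys is the conceptual picture of $\mathcal L_\delta\partial_x$ as the finite-depth analogue of $\mathcal H\partial_x$, at the cost of having to justify that $V$ extends continuously to $I\times\{0\}$; your local integration argument does handle this, so the proof stands.
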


\begin{proof}
We define
\be\label{a1}
F(x)=\partial_xf(x)+i \mathcal L_{\delta}\partial_x f(x),\;\;\;\;x\in\R,
\ee
and consider its Fourier transform
\be\label{a2}
\begin{aligned}
\widehat{F}(\xi)&=\widehat{(\partial_xf+i\mathcal L_{\delta}\partial_xf)}(\xi)\\
&=2\pi i \xi (1+\rm{coth}(2\pi\delta \xi))\,\widehat{f}(\xi)\\
&=2\pi i\xi \Big(1+\frac{e^{2\pi\delta\xi}+e^{-2\pi\delta\xi}}{e^{2\pi\delta\xi}-e^{-2\pi\delta\xi}}\,\Big)\,\widehat{f}(\xi)\\
&=-4\pi i \xi \,\frac{e^{4\pi\delta \xi}}{1-e^{4\pi \delta \xi}}\,\widehat{f}(\xi)
\end{aligned}
\ee

We observe  that by considering $\,\partial_xf$ with $\,f\in H^s(\R), \,s>3/2,$ one cancels the singularity of $\,F\,$ at $\,\xi=0\,$ introduced by $\,\rm{coth}(\xi)$.

 By hypothesis and \eqref{a2} one concludes that $\,\widehat{F}\in L^1(\R)$ and has exponential decay for $\,\xi<0$. Hence, 
 \be\label{a3}
 F(x)=\int_{-\infty}^{\infty}\;e^{2\pi i \xi x}\,\widehat{F}(\xi)\,d\xi
 \ee
has an analytic extension 
\be\label{a4}
 F(x+iy)=\int_{-\infty}^{\infty}\;e^{2\pi i \xi (x+iy)}\,\widehat{F}(\xi)\,d\xi
 \ee
to the strip
$$
D_{2\delta}=\{z=x+iy\in \C\,:\,0<y<2\delta\}
$$
with $\,F\,$ continuous on $$
\,\{z=x+iy\,:\,0\leq y<2\delta\}$$ from the hypothesis on $\,f$. Now, Proposition \ref{pro1} leads the desired result.

\end{proof}

\begin{proof} [Proof of Theorem \ref{TH5}]

Once Corollary \ref{col11} is available the proof of Theorem \ref{TH5} is similar to that given for Theorem \ref{TH1}, therefore it will be omitted.

\end{proof}

\section{Proof of Theorem \ref{TH10}}

To prove Theorem \ref{TH10} we need an auxiliary lemma:

\begin{lemma}
\label{lemma1}
Let $\,f\in L^2(\R)$ be a real valued function. 
If there exists an open set $\,I \subset \R,\;0\in I,\,$ such that
\be\label{H12}
f(x,0)=0,\;\;\;\;x\in I,
\ee
and for each $\,N\in\Z^+$
\begin{equation}
\label{H11}
\int_{|x|\leq R}\;|\,\mathcal H f(x)|^2dx\leq c_N\,R^N\;\;\;\;\;\;\text{as}\;\;\;\;\;R \downarrow \,0,
\end{equation}
then,
\be\label{result10}
f(x)=0,\;\;\;\;x\in \R.
\ee

\end{lemma}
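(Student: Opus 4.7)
The plan is to adapt the complex-analytic viewpoint used in the proofs of Corollary \ref{col1} and Theorem \ref{TH5}: represent $f$ as the jump of a Cauchy integral, show that the $L^2$ vanishing at $0$ forces an associated analytic function to have a zero of infinite order, and then invoke analyticity to conclude.

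First, I would define
\[
F(z)=\frac{1}{\pi i}\int_{\R}\frac{f(y)}{y-z}\,dy,\qquad z\in\C\setminus\R,
\]
which is analytic off the real axis, and whose non-tangential boundary value from $D_\infty=\{\text{Im}\,z>0\}$ is $f+i\mathcal H f$ in the $L^2$ sense (this is exactly the identity \eqref{HT} used in the proof of Corollary \ref{col1}). Second, I would use the hypothesis \eqref{H12} that $f\equiv 0$ on the open interval $I\ni 0$ to restrict the integration domain:
\[
F(z)=\frac{1}{\pi i}\int_{\R\setminus I}\frac{f(y)}{y-z}\,dy,
\]
and this new representation is manifestly analytic in $\C\setminus(\R\setminus I)$, so $F$ extends analytically across $I$. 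A short check with the Plemelj--Sokhotski relations shows that the extended $F$ agrees on $I$ with $i\mathcal H f$, so in particular $F$ is analytic in an open complex neighborhood of $0$ and $F(x)=i\mathcal H f(x)$ for $x\in I$.

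Third, I would translate the infinite-order $L^2$ vanishing hypothesis \eqref{H11} into vanishing of the Taylor expansion of $F$ at the origin. Suppose for contradiction that $F\not\equiv 0$ near $0$; then there exist $k\in\Z^+\cup\{0\}$ and $a_k\neq 0$ with $F(z)=a_k z^k+O(z^{k+1})$, and therefore a constant $c>0$ such that
\[
\int_{|x|\leq R}|F(x)|^2\,dx\;\geq\; c\,R^{2k+1}
\]
for all sufficiently small $R>0$. Since $F=i\mathcal H f$ on $I$, comparing with \eqref{H11} for the choice $N=2k+2$ yields $cR^{2k+1}\leq c_{2k+2}R^{2k+2}$ for $R\downarrow 0$, a contradiction. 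Hence $F\equiv 0$ in a neighborhood of $0$, and by analytic continuation on the connected open set $D_\infty\cup I\cup(\text{extension})$ we get $F\equiv 0$ throughout the upper half-plane, so that $f=\mathrm{Re}\,(f+i\mathcal H f)=0$ almost everywhere.

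The main obstacle is the second step: justifying cleanly that the formula obtained by removing $I$ from the Cauchy integral really does analytically continue $F$ across $I$, and that its restriction to $I$ coincides with $i\mathcal H f$. This is a standard consequence of the Plemelj jump formulas together with the fact that $f$ vanishes on a neighborhood of each point of $I$, but it should be written out carefully (for instance, by first localizing with a small $I'\Subset I$ containing $0$ so that the integrand $f(y)/(y-z)$ is regular for $y\in\R\setminus I'$ and $z$ in a complex neighborhood of $I'$). Once this analytic continuation is in place, the "infinite order zero implies identically zero" argument is immediate from the power series expansion.
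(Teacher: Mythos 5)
Your proof is correct and follows essentially the same route as the paper: both continue the analytic function with boundary value $f+i\mathcal Hf$ across $I$ (the paper via Schwarz reflection, you via the Cauchy integral with the integration restricted to $\R\setminus I$), convert hypothesis \eqref{H11} into an infinite-order zero at the origin, and conclude by the identity theorem. The localization to $I'\Subset I$ that you flag is the right precaution, and your leading-Taylor-coefficient contradiction is the same mechanism the paper uses to deduce $\partial_x^j\mathcal Hf(0)=0$ for all $j$.
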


\begin{proof}
 Consider the analytic function $F=F(x+iy)$ defined  in $\,\R\times(0,\infty)$ with boundary values
$$F(x+i0)=-\mathcal Hf(x)+if(x).$$

Since $\,F\big|_{I}$ is real we can use Schwarz reflexion principle to find $\,\widetilde {F}$ analytic in $\,I\times (-\infty,\infty)\,$  with $\,\widetilde{F}=F$ on $\,I\times [0,\infty)$.
\vskip.07in
We observe : $\Re\,\widetilde{F}(x+i0)=\mathcal H f(x),\;x\in I\,$ with $\,\mathcal H f\big|_{I}\in C^{\infty}$, by the support property of $\,f$, and by assumption  \eqref{H11}  $\,\partial^j_x\mathcal H f(0)=0$, $\,j\in\Z^+\cup\{0\}$. Hence
$$
\frac{\partial^j}{\partial z^j}\,\widetilde {F}(0,0)=0\;\;\;\;\;\;\;j=0,1,2,....
$$
which completes the proof. 
\end{proof}

\vskip.1in

\begin{proof}[Proof of Theorem \ref{TH10}]
Defining $w(x,t)=(u_1-u_2)(x,t)$ it follows that
\be\label{eq11}
\partial_tw-\mathcal H \partial_x^2 w+\partial_xu_1 \,w+u_2\,\partial_xw=0,\;\;\;(x,t)\in\R\times[0,T].
\ee

Since $\,w(x,0)=0,\;x\in I$, one has that $\partial_x^jw(x,0)=0,\;x\in I$, $\,j\in \Z^+\cup\{0\}$, and using \eqref{eq11} 
$$
\mathcal H \partial_x^2 w(x,0)=\partial_tw(x,0)
$$
We now apply the hypothesis \eqref{H11} and Lemma \ref{lemma1} to conclude that $\,\partial_x^2w(x,0)=0,\;x\in\R$.

\end{proof}

\vspace{5mm}
\noindent\underline{\bf Acknowledgements.} C.E.K.  was supported by the NSF grant DMS-1800082. L.V. was supported by an
ERCEA Advanced Grant 2014 669689 - HADE, by the MINECO and by  BCAM Severo Ochoa excellence accreditation SEV-2013-0323.
project MTM2014-53850-P.

\end{document}